\renewcommand{\=}{\doteq}
\newcommand{\T}{\mathfrak{T}}
\newcommand{\g}{\overline{g}}
\newcommand{\h}{\overline{h}}
\newcommand{\e}{\overline{e}}
\renewcommand{\gg}{g^{-1}}
\newcommand{\hh}{h^{-1}}
\newtheorem{thm}{Theorem}[section]
 \newtheorem{cor}[thm]{Corollary}
 \newtheorem{prop}[thm]{Proposition}
 \newtheorem{lemma}[thm]{Lemma}
\theoremstyle{definition}
 \newtheorem{defn}[thm]{Definition}
\theoremstyle{definition}
\theoremstyle{definition}
\numberwithin{equation}{section}
\numberwithin{equation}{section}
\begin{document}
\title{\bf  Moufang symmetry VIII.\\
Reconstruction of Moufang loops}
\author{Eugen Paal}
\date{}
\maketitle
\thispagestyle{empty}
\begin{abstract}
The reconstruction theorem for the Moufang loops is proved.
\par\smallskip
{\bf 2000 MSC:} 20N05
\end{abstract}

\section{Introduction}

In the present paper we prove the reconstuction theorem for the Moufang loops. 
We in part follow ideas presented in \cite{Doro} and \cite{Paal7}. 

\section{Moufang loops}

A \emph{Moufang loop} \cite{RM} (see also \cite{Bruck,Bel,HP}) is a set $G$ with a binary operation  (multiplication) 
$\cdot: G\times G\to G$, denoted also by juxtaposition, so that the following three axioms are satisfied:
\begin{enumerate}
\itemsep-3pt
\item[1)] 
in equation $gh=k$, the knowledge of any two of $g,h,k\in G$ specifies the third one \emph{uniquely},
\item[2)] 
there is a distinguished element $e\in G$ with the property $eg=ge=g$ for all $g\in G$,
\item[3)] 
the \emph{Moufang identity} 
\begin{equation*}
\label{Moufang}
(gh)(kg) = g(hk)g
\end{equation*}
holds in $G$.
\end{enumerate}
Recall that a set with a binary operation is called a \emph{groupoid}. A groupoid $G$ with axiom 1) is called a \emph{quasigroup}. If axioms 1) and 2) are satisfied, the grupoid (quasigroup) $G$ is called a \emph{loop}. The element $e$ in axiom 2) is called the \emph{unit} (element) of the (Moufang) loop $G$.

\section{Reconstruction Theorem}

\begin{thm}[reconstruction]
\label{rec-thm}
Let $G$ be a groupoid, $\T$ be a group with the unit element $E\in\T$, and $(S,TP)$ be a triple of maps $S,T,P:G\to \T$ such that:
\begin{itemize}
\itemsep-3pt
\item[1)]
for all $g$ in $G$ we have
\begin{equation}
\label{rec-thm-1}
S_gT_gP_g=E
\end{equation}
\item[2)]
for all $g$ in $G$ there exists $\g$ in $G$ such that 
\begin{equation}
\label{rec-thm-2}
S_{\g}\overset{(a)}{=}S^{-1}_g,\quad
T_{\g}\overset{(b)}{=}T^{-1}_g
\end{equation}
\item[3)]
for all $g,h$ in $G$ relations
\begin{align}
\label{rec-thm-3}
S_{\g h}\overset{(a)}{=}P_gS_hT_g,\quad
T_{\g h}\overset{(b)}{=}S_gT_hP_g,\quad
P_{\g h}\overset{(c)}{=}T_gP_hS_g\\
\label{rec-thm-4}
S_{h\g}\overset{(a)}{=}T_gS_hP_g,\quad
T_{h\g}\overset{(b)}{=}P_gT_hS_g,\quad
P_{h\g}\overset{(c)}{=}S_gP_hT_g
\end{align}
are satisafied in $\T$,
\item[4)]
from $S_g=S_h$ and $T_g=T_h$  it follows that $g=h$.
\end{itemize}
Then $G$ is a Moufang loop. The unit element of $G$ is $e\=g\g=\g g$, which does not depend on the choice of $g$ in $G$, and the inverse element of $g$ is $g^{-1}=\g$.
\end{thm}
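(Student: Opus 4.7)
My plan is to reduce all three Moufang loop axioms to the injectivity hypothesis~4 via the formulas that (\ref{rec-thm-3})--(\ref{rec-thm-4}) make available. First, I will extract the structural skeleton. Hypothesis~2 combined with hypothesis~4 determines $\g$ uniquely and, applied twice, gives $\overline{\g}=g$. Axiom (\ref{rec-thm-1}) makes $P_g$ redundant: $P_g=T_g^{-1}S_g^{-1}$, hence $P_{\g}=T_gS_g$. Substituting $g\mapsto\g$ in (\ref{rec-thm-3})--(\ref{rec-thm-4}) then produces explicit formulas for $S_{gh}$, $T_{gh}$, $P_{gh}$ in terms of the data at $g$ and $h$ alone. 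Computing $S_{g\g}$ two ways --- once from the first relation of (\ref{rec-thm-4}) with $h=g$, and once from the substituted first relation of (\ref{rec-thm-3}) at $h=\g$ --- yields $T_gS_gP_g=E$, forcing $S_gT_g=T_gS_g$ and, together with (\ref{rec-thm-1}), pairwise commutativity of $S_g, T_g, P_g$. This commutativity is the engine of the rest of the argument.

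Next I handle the unit and divisibility. Direct evaluation of the product formulas at $h=\g$ or $h=g$, combined with cyclic rearrangement of (\ref{rec-thm-1}) and the commutativity just derived, gives $S_{g\g}=T_{g\g}=S_{\g g}=T_{\g g}=E$, so hypothesis~4 forces $g\g=\g g=:e_g$; because $(S_{e_g},T_{e_g})=(E,E)$ for every $g$, a second appeal to hypothesis~4 shows that $e_g$ is a single element $e$ independent of $g$. Substituting $S_e=T_e=P_e=E$ into the derived formulas for $S_{eh}$ and $T_{eh}$ returns $S_h$ and $T_h$, so $eh=h$ by hypothesis~4, and $he=h$ follows dually. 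For the divisibility axiom, injectivity of left and right multiplication is immediate because the derived formulas express $S_{gh}$, $T_{gh}$ as $A_g S_h B_g$, $A'_g T_h B'_g$ with pre- and post-factors depending only on $g$. For surjectivity I will show constructively that $h:=\g k$ solves $gh=k$: substituting this $h$ into the product formulas and exploiting commutativity telescopes the nested conjugations down to $S_k$, $T_k$, and hypothesis~4 gives $g(\g k)=k$; the case $hg=k$ is symmetric with $h:=k\g$.

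The Moufang identity is where I expect the main obstacle. The strategy is unchanged --- compute $S$ and $T$ of both $(gh)(kg)$ and $g(hk)g$ by iterated application of the product formulas from the first step, then invoke hypothesis~4 once they agree --- but each of these products unfolds into a chain of nested $S$, $T$, $P$ conjugations, and collapsing them to a common form will require repeated use of (\ref{rec-thm-3})--(\ref{rec-thm-4}) together with the commutativity $[S_g,T_g]=E$. Once $S_{(gh)(kg)}=S_{g(hk)g}$ and $T_{(gh)(kg)}=T_{g(hk)g}$ have been brought into coincidence, the Moufang identity follows by a final application of hypothesis~4, completing the proof.
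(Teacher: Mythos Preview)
Your proposal is correct and follows essentially the same route as the paper: derive pairwise commutativity of $S_g,T_g,P_g$, build the unit and two-sided divisibility from it via hypothesis~4, and then verify the Moufang identity by computing $S$ and $T$ of both sides. The only organizational difference is that the paper explicitly isolates the intermediate identities $(gh)^{-1}=h^{-1}g^{-1}$, flexibility $gh\cdot g=g\cdot hg$, and the triple closure $S_gS_hS_g=S_{ghg}$ (and its $T,P$ analogues) before attacking the Moufang identity; these are exactly the lemmas that make the ``collapsing'' you anticipate in the final step go through cleanly, so you should expect them to emerge in your computation.
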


We prove this theorem step by step. In what follows, $G$ denotes a groupoid. 

\section{Construction of unit end inverse elements}

\begin{prop}
We have $\overline{\g}=g$ for all $g$ in $G$.
\end{prop}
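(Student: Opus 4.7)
The plan is to use axiom 2) twice and then invoke the injectivity condition in axiom 4) to identify $\overline{\overline{g}}$ with $g$. The statement is essentially just saying that the operation $g \mapsto \overline{g}$ is an involution, and the proof should be a short calculation rather than anything substantive.

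First I would apply axiom 2) to the element $\overline{g}$ itself: there exists an element, which we denote $\overline{\overline{g}}$, such that
\begin{equation*}
S_{\overline{\overline{g}}} = S_{\overline{g}}^{-1}, \qquad T_{\overline{\overline{g}}} = T_{\overline{g}}^{-1}.
\end{equation*}
Next I would substitute the defining relations for $\overline{g}$ from axiom 2) applied to $g$, namely $S_{\overline{g}}=S_g^{-1}$ and $T_{\overline{g}}=T_g^{-1}$. Taking inverses in $\T$ then gives
\begin{equation*}
S_{\overline{\overline{g}}} = (S_g^{-1})^{-1} = S_g, \qquad T_{\overline{\overline{g}}} = (T_g^{-1})^{-1} = T_g.
\end{equation*}

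Finally I would appeal to axiom 4): since $S_{\overline{\overline{g}}} = S_g$ and $T_{\overline{\overline{g}}} = T_g$, it follows that $\overline{\overline{g}} = g$, as required.

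There is no real obstacle here; the only thing worth double-checking is that axiom 2) is being read as asserting existence (with axiom 4) then guaranteeing uniqueness of such an element $\overline{g}$), so that the notation $\overline{\overline{g}}$ makes sense unambiguously. Once this is granted, the involutivity of the bar operation is an immediate formal consequence of $(x^{-1})^{-1}=x$ in the group $\T$.
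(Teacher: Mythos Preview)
Your proof is correct and is essentially identical to the paper's own argument: compute $S_{\overline{\overline{g}}}=(S_g^{-1})^{-1}=S_g$ and $T_{\overline{\overline{g}}}=(T_g^{-1})^{-1}=T_g$, then apply axiom~4). Your side remark about axiom~4) guaranteeing uniqueness of $\overline{g}$ is a useful clarification that the paper leaves implicit.
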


\begin{proof}
First calculate
\begin{equation*}
S_{\overline{\g}}
\overset{(\ref{rec-thm-2}a)}{=}S^{-1}_{\g}
=\left(S_{\g}\right)^{-1}
\overset{(\ref{rec-thm-2}a)}{=}\left(S^{-1}_{g}\right)^{-1}
=S_{g}
\end{equation*}
In the same way $T_{\overline{\g}}=T_{g}$. Finally use assumtion 4) of Theorem \ref{rec-thm} to get the desired relation.
\end{proof}

\begin{prop}
For all $g$ in $G$ we have
\begin{equation}
\label{STP-pre-inverse1}
S_{\g g}=T_{\g g}=P_{\g g}=E
\end{equation}
\end{prop}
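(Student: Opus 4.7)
The plan is to apply the relations in (\ref{rec-thm-3}) with $h=g$, which immediately reduce each of $S_{\g g}$, $T_{\g g}$, $P_{\g g}$ to a cyclic rearrangement of the product $S_g T_g P_g$. Then I would invoke axiom 1) of Theorem \ref{rec-thm}, together with the elementary fact that in any group, $ABC=E$ implies $BCA=CAB=E$.

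Concretely, I would write
\begin{equation*}
S_{\g g}\overset{(\ref{rec-thm-3}a)}{=}P_g S_g T_g,\quad
T_{\g g}\overset{(\ref{rec-thm-3}b)}{=}S_g T_g P_g,\quad
P_{\g g}\overset{(\ref{rec-thm-3}c)}{=}T_g P_g S_g.
\end{equation*}
The middle expression equals $E$ directly by (\ref{rec-thm-1}). For the first, rewrite $P_g S_g T_g = (S_g T_g P_g S_g T_g)(S_g T_g)^{-1}\cdots$; more cleanly, from $S_g T_g P_g=E$ we get $S_g T_g = P_g^{-1}$, whence $P_g S_g T_g = P_g P_g^{-1} = E$. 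For the third, from $S_g T_g P_g = E$ we get $T_g P_g = S_g^{-1}$, hence $T_g P_g S_g = S_g^{-1} S_g = E$.

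I expect no significant obstacle here: the statement is really just the observation that the cyclic invariance of the condition $S_g T_g P_g=E$ (a property of any identity product in a group) is precisely what the substitution $h=g$ in the reconstruction axioms (\ref{rec-thm-3}a,b,c) picks up. The role of axiom 2) is implicit only in guaranteeing that the element $\g$ used in forming $\g g$ exists in $G$; no further use of (\ref{rec-thm-2}) or (\ref{rec-thm-4}) is needed for this proposition.
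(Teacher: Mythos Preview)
Your proposal is correct and is essentially the same argument as the paper's: substitute $h=g$ in the relations (\ref{rec-thm-3}a--c) and then use assumption~1) together with the cyclic invariance of $S_gT_gP_g=E$ in the group $\T$. The paper's one-line proof actually cites ``(\ref{rec-thm-1}a--c) and (\ref{rec-thm-2}a--c)'', which is evidently a typographical slip for (\ref{rec-thm-3}a--c) (and possibly (\ref{rec-thm-4}a--c)), since neither (\ref{rec-thm-1}) nor (\ref{rec-thm-2}) contains the variable $h$; your version spells out the intended computation explicitly and correctly observes that (\ref{rec-thm-4}) and (\ref{rec-thm-2}) are not needed here beyond the existence of $\g$.
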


\begin{proof}
In  (\ref{rec-thm-1}a--c) and (\ref{rec-thm-2}a--c) take $h=g$ and use assumption 1) of Theorem \ref{rec-thm}.
\end{proof}

\begin{prop}
We have
\begin{equation}
\label{P-inverse1}
P_{\g}=P^{-1}_g,\quad\forall g\in G
\end{equation}
\end{prop}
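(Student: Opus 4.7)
The plan is to combine the identity $S_gT_gP_g=E$ from (\ref{rec-thm-1})---equivalently $P_g^{-1}=S_gT_g$---with a judicious substitution into one of the covariance relations so as to isolate $P_{\g}$. The trick is to arrange that the left-hand side produces $P_{\g g}$, which has just been shown to equal $E$ by (\ref{STP-pre-inverse1}).

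Concretely, I would apply the relation (\ref{rec-thm-4}c), namely $P_{h\g}=S_g P_h T_g$, under the substitution $g\mapsto\g$ and $h\mapsto\g$. Using $\overline{\g}=g$ from the first proposition of this section, the left-hand side becomes $P_{\g\cdot g}$, which equals $E$ by (\ref{STP-pre-inverse1}). The right-hand side becomes $S_{\g}P_{\g}T_{\g}$, which simplifies to $S_g^{-1}P_{\g}T_g^{-1}$ by (\ref{rec-thm-2}a,b). Rearranging gives $P_{\g}=S_g T_g$, and invoking (\ref{rec-thm-1}) in the form $P_g^{-1}=S_g T_g$ yields $P_{\g}=P_g^{-1}$.

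I do not anticipate any real obstacle here, since the two conditions $\overline{\g}=g$ and $P_{\g g}=E$ from the preceding propositions do essentially all the work once the substitution is chosen. The only subtlety is the choice of which covariance relation to use: the analogous attempt with (\ref{rec-thm-3}c) under the same substitution produces $P_{g\g}$ on the left-hand side, which has not yet been shown to equal $E$, so that path is premature and must be avoided until $e\=g\g=\g g$ has been established later.
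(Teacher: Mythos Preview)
Your proposal is correct and essentially identical to the paper's own proof: both apply (\ref{rec-thm-4}c) with $g\mapsto\g$, $h\mapsto\g$ (using $\overline{\g}=g$) to obtain $E=P_{\g g}=S_{\g}P_{\g}T_{\g}=S_g^{-1}P_{\g}T_g^{-1}$, hence $P_{\g}=S_gT_g$. The only cosmetic difference is that the paper then verifies $P_gP_{\g}=E=P_{\g}P_g$ explicitly, whereas you invoke $P_g^{-1}=S_gT_g$ directly from (\ref{rec-thm-1}); these are the same step.
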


\begin{proof}
First calculate 
\begin{equation*}
E\overset{(\ref{STP-pre-inverse1})}{=}P_{\g g}
\overset{(\ref{rec-thm-4}c)}{=}S_{\g}P_{\g}T_{\g}
\overset{(\ref{rec-thm-2})}{=}S^{-1}P_{\g}T^{-1}_{g}
\end{equation*}
from which it follows that
\begin{equation}
\label{P-inverse2}
P_{\g}=S_gT_g
\end{equation}
Now calculate
\begin{gather*}
P_gP_{\g}=(T^{-1}_gS^{-1}_g)(S_gT_g)=E\\
P_{\g}P_g=(S_gT_g)(T^{-1}_gS^{-1}_g)=E
\end{gather*}
which imply the desired relation.
\end{proof}

\begin{prop}
We have
\begin{equation}
\label{ST-comm}
S_gT_g=T_gS_g,\quad \forall g\in G
\end{equation}
\end{prop}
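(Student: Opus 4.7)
The plan is to get two different expressions for $P_g^{-1}$ and equate them. From axiom (\ref{rec-thm-1}) we have $S_g T_g P_g = E$, which immediately gives $P_g^{-1} = S_g T_g$. So it suffices to show that $P_g^{-1}$ also equals $T_g S_g$.

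For this I would exploit the involution $\overline{\overline{g}} = g$ established in the first proposition of this section, together with formula (\ref{P-inverse2}) from the proof of the previous proposition, which asserts $P_{\g} = S_g T_g$. Substituting $g \mapsto \g$ in (\ref{P-inverse2}) and using $\overline{\g} = g$, I get
\begin{equation*}
P_g = P_{\overline{\g}} = S_{\g} T_{\g} \overset{(\ref{rec-thm-2})}{=} S_g^{-1} T_g^{-1}.
\end{equation*}
Inverting both sides yields $P_g^{-1} = T_g S_g$.

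Combining the two expressions for $P_g^{-1}$ gives $S_g T_g = T_g S_g$, as desired. I expect no real obstacle here: the substitution trick using $\overline{\g} = g$ together with the already-derived intermediate formula (\ref{P-inverse2}) does all the work, and the proof is just a short chain of equalities.
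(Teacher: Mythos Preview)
Your proof is correct and is essentially the same as the paper's: both arguments obtain two expressions for $P_{\g}$ (equivalently $P_g^{-1}$), one giving $S_gT_g$ and the other $T_gS_g$, and then equate them. The only cosmetic difference is that the paper writes the chain as $S_gT_g = P_{\g} = T_{\g}^{-1}S_{\g}^{-1} = T_gS_g$, invoking (\ref{rec-thm-1}) at $\g$ directly rather than substituting $g\mapsto\g$ in (\ref{P-inverse2}).
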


\begin{proof}
Calculate
\begin{equation*}
S_gT_g
\overset{(\ref{P-inverse1})}{=}P_{\g}
\overset{(\ref{P-inverse2})}{=}T^{-1}_{\g}S^{-1}_{\g}
\overset{(\ref{rec-thm-2})}{=}T_gS_g
\tag*{\qed}
\end{equation*}
\renewcommand{\qed}{}
\end{proof}

\begin{prop}
We have
\begin{equation}
\label{TP-PS-comm}
T_gP_g\overset{(a)}{=}P_gT_g,\quad 
P_gS_g\overset{(b)}{=}S_gP_g,\quad \forall g\in G
\end{equation}
\end{prop}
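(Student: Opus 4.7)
The plan is to exploit two factorizations of $P_g$ in terms of $S_g$ and $T_g$. First, axiom~1 of Theorem~\ref{rec-thm}, namely equation~(\ref{rec-thm-1}), rearranges to
\begin{equation*}
P_g=T_g^{-1}S_g^{-1}.
\end{equation*}
Second, the commutativity $S_gT_g=T_gS_g$ already established in (\ref{ST-comm}) means $(S_gT_g)^{-1}=(T_gS_g)^{-1}$, which yields the complementary factorization
\begin{equation*}
P_g=S_g^{-1}T_g^{-1}.
\end{equation*}

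With both expressions available, I would verify (\ref{TP-PS-comm}a) by computing $T_gP_g$ via $P_g=T_g^{-1}S_g^{-1}$ and $P_gT_g$ via $P_g=S_g^{-1}T_g^{-1}$; after cancelling the adjacent $T_g^{\pm1}$ in each, both products collapse to the common value $S_g^{-1}$. The same computation with the roles of $S_g$ and $T_g$ interchanged settles (\ref{TP-PS-comm}b), with common value $T_g^{-1}$.

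I do not anticipate any real obstacle here. The only content of the argument is recognizing that axiom~1 supplies one factorization of $P_g$ and that (\ref{ST-comm}) promotes it to a symmetric pair; once that is observed, the commutation relations follow by a one-line cancellation inside the group $\T$. A natural byproduct worth recording along the way is the closed-form evaluation $T_gP_g=P_gT_g=S_g^{-1}$ and $S_gP_g=P_gS_g=T_g^{-1}$, which is likely to be useful later in the reconstruction.
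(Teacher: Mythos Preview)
Your proposal is correct and follows essentially the same route as the paper: both arguments rest on the factorization $P_g=T_g^{-1}S_g^{-1}$ from (\ref{rec-thm-1}) together with the commutativity (\ref{ST-comm}), and then cancel in $\T$. The only cosmetic difference is that for (b) the paper writes $S_g=P_g^{-1}T_g^{-1}$ and invokes the just-proved (a), whereas you use the two factorizations of $P_g$ symmetrically; your byproduct identities $T_gP_g=S_g^{-1}$ and $S_gP_g=T_g^{-1}$ are exactly what the paper's chains pass through.
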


\begin{proof}
Calculate
\begin{align*}
T_gP_g&=T_gT^{-1}_gS^{-1}_g=T^{-1}_gS^{-1}_gT_g=P_gT_g\\
P_gS_g&=P_gP^{-1}_gT^{-1}_g=P^{-1}_gT^{-1}_gP_g=S_gP_g
\tag*{\qed}
\end{align*}
\renewcommand{\qed}{}
\end{proof}

\begin{prop}
We have
\begin{equation}
\label{STP-pre-inverse2}
S_{g\g}=T_{g\g}=P_{g\g}=E,\quad \forall g\in G
\end{equation}
\end{prop}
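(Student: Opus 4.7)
The plan is to derive this from the already-established identity (\ref{STP-pre-inverse1}), namely $S_{\g g}=T_{\g g}=P_{\g g}=E$, by substituting $\g$ in place of $g$. Since (\ref{STP-pre-inverse1}) holds for every element of $G$, applying it to $\g$ yields $S_{\overline{\g}\,\g}=T_{\overline{\g}\,\g}=P_{\overline{\g}\,\g}=E$.

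Then I would invoke the earlier proposition $\overline{\g}=g$ to rewrite $\overline{\g}\,\g$ as $g\g$, which produces the desired relations $S_{g\g}=T_{g\g}=P_{g\g}=E$.

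There is essentially no obstacle here: the content is purely the involutivity of the bar operation together with the symmetric relations for $S,T,P$ already verified on the product $\g g$. The only point to be careful about is that (\ref{STP-pre-inverse1}) is stated for \emph{all} $g\in G$, so the substitution $g\mapsto\g$ is legitimate; everything else is formal.
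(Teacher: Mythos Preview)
Your argument is correct and takes a genuinely different route from the paper. The paper establishes each equality by direct computation: it applies relations (\ref{rec-thm-4}a--c) with $h=g$ to obtain $S_{g\g}=T_gS_gP_g$, $T_{g\g}=P_gT_gS_g$, $P_{g\g}=S_gP_gT_g$, and then invokes the pairwise commutativity of $S_g,T_g,P_g$ established in (\ref{ST-comm}) and (\ref{TP-PS-comm}) to rearrange each product into $S_gT_gP_g\overset{(\ref{rec-thm-1})}{=}E$. Your approach bypasses the commutation relations entirely, using only the involutivity $\overline{\g}=g$ together with (\ref{STP-pre-inverse1}); in that sense it is the more economical proof and shows that Propositions stating (\ref{ST-comm}) and (\ref{TP-PS-comm}) are not logically prerequisite for this one. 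The paper's route, by contrast, exercises the full set of structural relations (\ref{rec-thm-4}) and the commutativity lemmas, which are needed later anyway.
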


\begin{proof}
Calculate
\begin{align*}
S_{g\g}
&\overset{(\ref{rec-thm-4}a)}{=}T_gS_gP_g
\overset{(\ref{ST-comm})}{=}S_gT_gP_g
\overset{(\ref{rec-thm-1})}{=}E\\
T_{g\g}
&\overset{(\ref{rec-thm-4}b)}{=}P_gT_gS_g
\overset{(\ref{TP-PS-comm}a)}{=}S_gT_gP_g
\overset{(\ref{rec-thm-1})}{=}E\\
P_{g\g}
&\overset{(\ref{rec-thm-4}c)}{=}S_gP_gT_g
\overset{(\ref{TP-PS-comm}b)}{=}S_gT_gP_g
\overset{(\ref{rec-thm-1})}{=}E
\tag*{\qed}
\end{align*}
\renewcommand{\qed}{}
\end{proof}

\begin{prop}
We have
\begin{equation}
\label{pre-inverse}
g\g=\g g,\quad \forall g\in g
\end{equation}
\end{prop}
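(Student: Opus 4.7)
The plan is to combine the two preceding propositions on pre-inverses with the separation assumption~4) of Theorem~\ref{rec-thm}. By (\ref{STP-pre-inverse1}) we know $S_{\g g}=T_{\g g}=E$, and by (\ref{STP-pre-inverse2}) we know $S_{g\g}=T_{g\g}=E$. Hence
\begin{equation*}
S_{g\g}=E=S_{\g g},\qquad T_{g\g}=E=T_{\g g},
\end{equation*}
and assumption~4) of Theorem~\ref{rec-thm} immediately yields $g\g=\g g$.

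There is essentially no obstacle here: all the work has already been done in establishing (\ref{STP-pre-inverse1}) and (\ref{STP-pre-inverse2}), and the separation axiom is exactly tailored to convert equality of the triples $(S,T)$ into equality of the underlying elements of $G$. The only thing to check is that I am entitled to invoke (\ref{STP-pre-inverse2}) for the element $g$ itself (not just for some representative), which is clear since that proposition was stated for all $g\in G$.
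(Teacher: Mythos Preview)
Your proof is correct and matches the paper's own argument essentially verbatim: the paper also juxtaposes (\ref{STP-pre-inverse1}) and (\ref{STP-pre-inverse2}) to obtain $S_{\g g}=S_{g\g}$ and $T_{\g g}=T_{g\g}$, then invokes the separation assumption to conclude. (The paper cites ``(\ref{rec-thm-2})'' at the end, which appears to be a misprint for assumption~4); your explicit reference to assumption~4) is the right one.)
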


\begin{proof}
Use
\begin{equation*}
S_{\g g}\overset{(\ref{STP-pre-inverse1})}{=}E\overset{(\ref{STP-pre-inverse2})}{=}S_{g\g},\quad 
T_{\g g}\overset{(\ref{STP-pre-inverse1})}{=}E\overset{(\ref{STP-pre-inverse2})}{=}T_{g\g}
\end{equation*}
with (\ref{rec-thm-2}).
\end{proof}

\begin{prop}
Element $g\g=\g g$ of $G$ does not depend on the choice of $g$ in $G$.
\end{prop}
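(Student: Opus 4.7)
The plan is to invoke the injectivity assumption 4) of Theorem \ref{rec-thm} together with the equalities $S_{g\g}=T_{g\g}=E$ (relation \eqref{STP-pre-inverse2}), which were just established for every $g\in G$. This is what the whole preceding chain of propositions was set up to deliver, so no further combinatorial work on the Moufang identity should be required here.

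Concretely, I would fix two arbitrary elements $g,h\in G$ and simply read off from \eqref{STP-pre-inverse2} that
\begin{equation*}
S_{g\g}=E=S_{h\h},\qquad T_{g\g}=E=T_{h\h}.
\end{equation*}
Then assumption 4) of Theorem \ref{rec-thm}, applied to the pair of elements $g\g$ and $h\h$, forces $g\g=h\h$. Since $g$ and $h$ were arbitrary, the element $g\g$ (which by \eqref{pre-inverse} equals $\g g$) is the same for every choice of $g$, and can therefore be denoted $e$.

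There is essentially no obstacle: the only nontrivial ingredients are \eqref{STP-pre-inverse2} and hypothesis 4), both of which are already in place. The work of the previous propositions was to guarantee that $S$ and $T$ both vanish to $E$ on the products $g\g$, and once that is available, injectivity in the $(S,T)$-data finishes the argument in one line.
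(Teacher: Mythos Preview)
Your proposal is correct and matches the paper's own proof essentially line for line: both fix arbitrary $g,h\in G$, invoke \eqref{STP-pre-inverse2} to get $S_{g\g}=E=S_{h\h}$ and $T_{g\g}=E=T_{h\h}$, and then apply assumption 4) of Theorem \ref{rec-thm} to conclude $g\g=h\h$. (The paper's displayed proof actually contains a typo, writing $S$ twice instead of $S$ and $T$, but your version is the intended argument.)
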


\begin{proof}
It is sufficient to show that $g\g=h\h$ for all $g,h$ in $G$. The latter easily follows from
\begin{equation*}
S_{g\g}=E=S_{h\h},\quad S_{g\g}=E=S_{h\h}
\tag*{\qed}
\end{equation*}
\renewcommand{\qed}{}
\end{proof}

\begin{defn}
The uniquely defined element $g\g=\g g$ in $g$  is denoted as $e\=g\g=\g g$ .
\end{defn}

\begin{cor}
We have
\begin{equation}
S_e=T_e=P_e=E
\end{equation}
\end{cor}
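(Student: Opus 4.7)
The plan is to observe that the corollary is an immediate consequence of the definition of $e$ together with the earlier propositions, so essentially no new work is required. By definition $e \doteq g\g = \g g$ for any (and hence every) $g\in G$, so I would simply pick any $g\in G$, write $e = \g g$, and substitute into the established identities.

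Concretely, the first step is to invoke equation (\ref{STP-pre-inverse1}), which asserts $S_{\g g}=T_{\g g}=P_{\g g}=E$. Since $e=\g g$, this directly yields $S_e = T_e = P_e = E$. Alternatively one could use $e=g\g$ together with equation (\ref{STP-pre-inverse2}) to obtain the same conclusion; both routes agree because the preceding proposition shows $g\g = \g g$, and the proposition after that shows the value is independent of the chosen $g$.

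There is really no obstacle to anticipate here: the entire content of the corollary is a relabeling of results already proved in (\ref{STP-pre-inverse1}) and (\ref{STP-pre-inverse2}) using the definition $e\doteq g\g=\g g$. The only thing worth mentioning explicitly in the write-up is that the choice of $g$ used in the substitution is immaterial, which is guaranteed by the previous proposition establishing that $g\g$ does not depend on $g$. So the proof will consist of one short displayed calculation referencing (\ref{STP-pre-inverse1}) (or (\ref{STP-pre-inverse2})) and the definition of $e$.
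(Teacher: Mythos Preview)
Your proposal is correct and matches the paper's approach: the paper states this as a corollary with no proof at all, since it follows immediately from the definition $e\doteq g\g=\g g$ together with (\ref{STP-pre-inverse1}) (or equivalently (\ref{STP-pre-inverse2})), exactly as you describe.
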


\begin{prop}
The element $e$ in $G$ has the property that $\e=e$.
\end{prop}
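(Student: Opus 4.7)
The plan is to exploit the fact, just recorded in the preceding corollary, that $S_e=T_e=P_e=E$. Since we are working in a group $\T$ where $E^{-1}=E$, this tells us that the translation operators attached to $e$ coincide with their own inverses.

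First I would invoke assumption 2) of Theorem \ref{rec-thm} applied to the element $e$: there exists $\e\in G$ such that $S_{\e}=S_e^{-1}$ and $T_{\e}=T_e^{-1}$. Combining this with $S_e=T_e=E$ immediately yields $S_{\e}=E=S_e$ and $T_{\e}=E=T_e$.

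Then I would close the argument by appealing to assumption 4) of Theorem \ref{rec-thm}: the map $g\mapsto (S_g,T_g)$ separates points of $G$, so $S_{\e}=S_e$ together with $T_{\e}=T_e$ forces $\e=e$.

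I do not anticipate any real obstacle here; the content of the statement is essentially that the bar operation fixes the unit, and everything needed has already been assembled in the corollary and in axioms 2) and 4) of the reconstruction theorem. The only care to take is to notice that $E^{-1}=E$ in $\T$, which is what makes the identification $S_{\e}=S_e$ work.
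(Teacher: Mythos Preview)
Your proposal is correct and follows essentially the same line as the paper's own proof: use the corollary $S_e=T_e=E$ together with assumption~2) to get $S_{\e}=S_e^{-1}=E=S_e$ and $T_{\e}=T_e^{-1}=E=T_e$, then invoke assumption~4) to conclude $\e=e$. The paper's argument is identical apart from wording.
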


\begin{proof}
Note that
\begin{align*}
E&=S^{-1}_e=S_{\e}=S_e\\
E&=T^{-1}_e=T_{\e}=T_e
\end{align*}
and use assumption 4) of Theorem \ref{rec-thm}.
\end{proof}

\begin{thm}
We have
\begin{equation}
eg=ge=g,\quad \forall g\in G
\end{equation}
\end{thm}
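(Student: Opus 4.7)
The plan is to invoke assumption 4) of Theorem \ref{rec-thm}: it suffices to verify $S_{eg}=S_g$, $T_{eg}=T_g$, $S_{ge}=S_g$, and $T_{ge}=T_g$. The indispensable preparatory fact is the preceding proposition $\e=e$, which lets me rewrite $eg=\e g$ and $ge=g\e$; only after this rewriting do the reconstruction formulas (\ref{rec-thm-3}) and (\ref{rec-thm-4}) apply, with the outer element being $e$.

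For the left action, I would apply (\ref{rec-thm-3}a,b) with $g$ replaced by $e$ and use the preceding corollary $S_e=T_e=P_e=E$ to obtain
\[
S_{eg}=S_{\e g}=P_e S_g T_e=S_g,\qquad T_{eg}=T_{\e g}=S_e T_g P_e=T_g,
\]
so that assumption 4) forces $eg=g$. Symmetrically, (\ref{rec-thm-4}a,b) with $g$ replaced by $e$ yield
\[
S_{ge}=S_{g\e}=T_e S_g P_e=S_g,\qquad T_{ge}=T_{g\e}=P_e T_g S_e=T_g,
\]
and assumption 4) again gives $ge=g$.

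No step is particularly delicate: once the preceding propositions supplying $\e=e$ and $S_e=T_e=P_e=E$ are in hand, the argument collapses to pure substitution. The only genuine insight is realizing that $\e=e$ is precisely the identity needed to bring $eg$ and $ge$ into the forms $\e g$ and $g\e$ that are tractable by the available reconstruction formulas; without that self-conjugacy of $e$, neither (\ref{rec-thm-3}) nor (\ref{rec-thm-4}) could be directly invoked.
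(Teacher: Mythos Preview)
Your proof is correct and follows essentially the same approach as the paper's: verify $S_{eg}=S_g$, $T_{eg}=T_g$, $S_{ge}=S_g$, $T_{ge}=T_g$ via (\ref{rec-thm-3}) and (\ref{rec-thm-4}) with the outer variable $e$, then invoke assumption~4). The only difference is that you make explicit the use of $\e=e$ to cast $eg$ and $ge$ into the forms $\e g$ and $g\e$ required by those formulas, whereas the paper applies (\ref{rec-thm-3}) and (\ref{rec-thm-4}) directly without flagging this step; your version is slightly more careful in that respect.
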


\begin{proof}
First use (\ref{rec-thm-3}) and (\ref{rec-thm-4}) to see that
\begin{align*}
S_{eg}&=P_eS_gT_e=S_g\\
T_{eg}&=S_eT_gT_e=T_g\\
S_{ge}&=T_eS_gP_e=S_g\\
T_{ge}&=P_eT_gS_e=T_g\\
\end{align*}
and use assumption 4) of Theorem \ref{rec-thm}.
\end{proof}

\begin{defn}[unit and inverse element]
We call $e$ the \emph{unit element} of $G$ and $\gg\=\g$ the \emph{inverse element} of $g$ in $G$.
\end{defn}

\section{Properties of inverse elements}

\begin{lemma}
\label{gx=h}
For given $g,h$ in $G$, element $\gg h$ of $G$ is a solution of equation $gx=h$, i.e 
\begin{equation*}
g(\gg h)=h
\end{equation*}
\end{lemma}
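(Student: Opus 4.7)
\medskip

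The plan is to invoke assumption 4) of Theorem \ref{rec-thm}: it suffices to establish $S_{g(\gg h)}=S_h$ and $T_{g(\gg h)}=T_h$, after which the identification $g(\gg h)=h$ follows immediately.

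The relations (\ref{rec-thm-3}a,b) give formulas for $S$ and $T$ evaluated on products of the form $\g h$, but I need to evaluate them on $g\cdot(\g h)$, a product whose left factor is not barred. To handle this, I would first derive companion formulas for the unbarred product by substituting $g\mapsto\g$ into (\ref{rec-thm-3}a,b) and using $\overline{\g}=g$ together with (\ref{rec-thm-2}a,b) and (\ref{P-inverse1}). This yields
\begin{equation*}
S_{gh}=P_{\g}S_hT_{\g}=P_g^{-1}S_hT_g^{-1},\qquad
T_{gh}=S_{\g}T_hP_{\g}=S_g^{-1}T_hP_g^{-1},
\end{equation*}
valid for all $g,h\in G$.

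Applying these with $h$ replaced by $\g h$, and then using the original relations (\ref{rec-thm-3}a,b) to expand $S_{\g h}$ and $T_{\g h}$, I get
\begin{align*}
S_{g(\g h)}&=P_g^{-1}S_{\g h}T_g^{-1}=P_g^{-1}(P_gS_hT_g)T_g^{-1}=S_h,\\
T_{g(\g h)}&=S_g^{-1}T_{\g h}P_g^{-1}=S_g^{-1}(S_gT_hP_g)P_g^{-1}=T_h.
\end{align*}
Assumption 4) of Theorem \ref{rec-thm} then gives $g(\g h)=h$, which is the claim since $\gg=\g$ by the definition just made.

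There is no real obstacle here; the only subtlety is the preliminary step of transposing the relations of axiom 3) from $\g h$ to $gh$, which relies on the already-established facts $\overline{\g}=g$ and $P_{\g}=P_g^{-1}$.
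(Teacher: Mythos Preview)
Your proof is correct and follows essentially the same route as the paper: both reduce to verifying $S_{g(\gg h)}=S_h$ and $T_{g(\gg h)}=T_h$ via assumption~4), and both do so by applying (\ref{rec-thm-3}a,b) twice after the substitution $g\mapsto\g$ (using $\overline{\g}=g$ and the inversion rules for $S,T,P$). The only cosmetic difference is that you record the intermediate formulas $S_{gh}=P_{\g}S_hT_{\g}$ and $T_{gh}=S_{\g}T_hP_{\g}$ explicitly before specializing, whereas the paper writes the chain $S_{g(\gg h)}=P_{\gg}S_{\gg h}T_{\gg}=P_{\gg}P_gS_hT_gT_{\gg}=S_h$ directly.
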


\begin{proof}
It is sufficient to check that
\begin{equation*}
S_{g(\gg h)}=S_h,\quad
T_{g(\gg h)}=T_h
\end{equation*}
Use (\ref{rec-thm-3}) to calculate
\begin{align*}
S_{g(\gg h)}
&=P_{\gg}S_{\gg h}T_{\gg}
=P_{\gg}P_gS_hT_gT_{\gg}
=S_h\\
T_{g(\gg h)}
&=S_{\gg}T_{\gg h}P_{\gg}
=S_{\gg}S_gT_hP_gP_{\gg}
=T_h
\tag*{\qed}
\end{align*}
\renewcommand{\qed}{}
\end{proof}

\begin{thm}
For given $g,h$ in $G$ equation $gx=h$ has the unique solution that coincides with $x=\gg h$. 
\end{thm}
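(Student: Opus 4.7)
The existence part is already handled by Lemma \ref{gx=h}, which exhibits $x = \gg h$ as a solution. So the only thing left to prove is uniqueness, and the obvious tool is the faithfulness assumption 4) of Theorem \ref{rec-thm}: it suffices to show that any solution $x$ of $gx = h$ satisfies $S_x = S_{\gg h}$ and $T_x = T_{\gg h}$.

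To extract $S_x$ and $T_x$ from the assumption $gx = h$, the plan is to invoke identity (\ref{rec-thm-3}) with $g$ replaced by $\g$. Since we have already proved $\overline{\g} = g$ and $P_{\g} = P^{-1}_g$ (together with $S_{\g} = S^{-1}_g$, $T_{\g} = T^{-1}_g$ from (\ref{rec-thm-2})), the substitution converts (\ref{rec-thm-3}a,b) into
\begin{equation*}
S_{g y} = P^{-1}_g S_y T^{-1}_g, \qquad T_{g y} = S^{-1}_g T_y P^{-1}_g, \qquad \forall y\in G.
\end{equation*}
Setting $y=x$ and using $gx=h$ on the left, I then solve for $S_x$ and $T_x$, obtaining $S_x = P_g S_h T_g$ and $T_x = S_g T_h P_g$. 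But these are precisely $S_{\gg h}$ and $T_{\gg h}$ by a direct application of (\ref{rec-thm-3}a,b), since $\gg = \g$ by definition. Faithfulness (assumption 4) then forces $x = \gg h$.

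The main obstacle is purely bookkeeping: making sure the substitution $g \mapsto \g$ in (\ref{rec-thm-3}) is applied correctly, and that $\overline{\g}=g$ and the inversion rules for $S_{\g}, T_{\g}, P_{\g}$ are used in the right places to convert the resulting expressions back to what appears in the statement. There is no genuine difficulty beyond this, so the argument is short once the two $S$- and $T$-identities are set up side by side.
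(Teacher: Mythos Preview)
Your proposal is correct and follows essentially the same route as the paper: use Lemma \ref{gx=h} for existence, then for uniqueness apply (\ref{rec-thm-3}a,b) with $g$ replaced by $\g$ (using $\overline{\g}=g$ and the inversion rules for $S_{\g},T_{\g},P_{\g}$) to read off $S_x=P_gS_hT_g=S_{\gg h}$ and $T_x=S_gT_hP_g=T_{\gg h}$, and conclude via assumption~4). The only difference is that you are a bit more explicit about the substitution step than the paper, which simply writes the resulting identities $S_{gy}=P_g^{-1}S_yT_g^{-1}$ and $T_{gy}=S_g^{-1}T_yP_g^{-1}$ directly.
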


\begin{proof}
We already know from lemma \ref{gx=h} that $x=\gg h$ is a solution of equation.$gx=h$. Let $y$ be another solution, i.e $gy=h$. We show that $y=\gg h$. We can see that
\begin{align*}
S_h
&=S_{gy}
=P^{-1}S_yT^{-1}_g\\
T_h
&=T_{gy}
=S^{-1}T_yP^{-1}_g
\end{align*}
which imply
\begin{align*}
S_y&=P_gS_hT_g=S_{\gg h}\\
T_y&=S_gT_hP_g=T_{\gg h}
\end{align*} 
It remains to use assumption 4) of Theorem \ref{rec-thm} finish the proof.
\end{proof}

By repeating the above proof we get

\begin{thm}
For given $g,h$ in $G$ equation $xg=h$ has the unique solution $x=h\gg$. 
\end{thm}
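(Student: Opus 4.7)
The plan is to mirror the proof of the previous theorem step-for-step, replacing the appeals to relations (\ref{rec-thm-3}) by the analogous relations (\ref{rec-thm-4}), which govern the maps $S$, $T$, $P$ on products of the form $h\g$ rather than $\g h$. First, in analogy with Lemma~\ref{gx=h}, I would establish that $(h\gg)g = h$, so that $x = h\gg$ is a solution of $xg = h$. By assumption~4) of Theorem~\ref{rec-thm}, this reduces to verifying $S_{(h\gg)g} = S_h$ and $T_{(h\gg)g} = T_h$.

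To check these identities I would apply (\ref{rec-thm-4}) twice: first to expand the outer product and then to expand the inner factor $S_{h\gg}$ (resp.\ $T_{h\gg}$). Noting that $\overline{\gg} = g$ by the first proposition of Section~4, the substitution $g\mapsto\gg$ in (\ref{rec-thm-4}a) gives $S_{(h\gg)g} = T_{\gg} S_{h\gg} P_{\gg}$. A second application of (\ref{rec-thm-4}a) yields $S_{h\gg} = T_g S_h P_g$, and substituting the inverses $T_{\gg} = T_g^{-1}$, $P_{\gg} = P_g^{-1}$ from (\ref{rec-thm-2}) and (\ref{P-inverse1}) causes the surrounding factors to telescope, leaving $S_h$. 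The computation for $T$, using (\ref{rec-thm-4}b), is completely parallel.

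For uniqueness, suppose $yg = h$ for some $y \in G$. Applying (\ref{rec-thm-4}a,b) with $g \mapsto \gg$ to $S_{yg}$ and $T_{yg}$ produces $S_h = T_g^{-1} S_y P_g^{-1}$ and $T_h = P_g^{-1} T_y S_g^{-1}$. Solving these for $S_y$ and $T_y$ and recognizing the results as $T_g S_h P_g = S_{h\gg}$ and $P_g T_h S_g = T_{h\gg}$ via (\ref{rec-thm-4}a,b), assumption~4) of Theorem~\ref{rec-thm} then forces $y = h\gg$. The only point requiring care throughout is the index bookkeeping around the substitution $g \mapsto \gg$ combined with the identity $\overline{\gg} = g$; no substantive obstacle arises, since the argument is perfectly symmetric to the one used for the equation $gx = h$.
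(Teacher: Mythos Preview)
Your proposal is correct and is precisely the argument the paper has in mind: the paper's own proof consists solely of the phrase ``By repeating the above proof we get,'' and your write-up carries out exactly that repetition, swapping the relations~(\ref{rec-thm-3}) for (\ref{rec-thm-4}) and using $\overline{\gg}=g$ to cast $yg$ and $(h\gg)g$ into the form $\cdot\,\overline{\gg}$. The telescoping computations you outline are the right ones and go through without issue.
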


\begin{cor}[\cite{Bruck,Bel}]
Groupoid $G$ is a an inverse property loop (IP-loop).
\end{cor}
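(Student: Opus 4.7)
The plan is to derive the two inverse-property identities $\gg(gh)=h$ and $(hg)\gg=h$ directly from the previous two theorems (uniqueness of solutions to $gx=h$ and $xg=h$) together with the involution $\overline{\g}=g$ already established. Once these are proved, combined with the fact that $G$ is a loop with two-sided inverse $\gg=\g$ (already shown), the corollary is immediate from the definition of an IP-loop.

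First, recall from Lemma \ref{gx=h} that $g(\gg h)=h$ holds for every $g,h\in G$. Substituting $g\mapsto\g$ in this identity and using $\overline{\g}=g$, one obtains $\g(gh)=h$, i.e.\ $\gg(gh)=h$. A completely parallel argument, starting from the identity $(h\gg)g=h$ provided by the second theorem, yields $(hg)\gg=h$ after the substitution $g\mapsto\g$.

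Alternatively (and this will serve as a sanity check), one can deduce the same identities from uniqueness alone: since $gx=gk$ has both $k$ and $\gg(gk)$ as solutions, uniqueness forces $\gg(gk)=k$; similarly $(kg)\gg=k$. Either route confirms the inverse property.

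The only real subtlety is a bookkeeping issue rather than a mathematical obstacle: one must make sure that the symbol $\gg$, which by definition equals $\g$, is the same two-sided inverse appearing in both the left and right inverse-property identities. This is guaranteed by the uniqueness of $e=g\g=\g g$ shown earlier and by the involutive nature of $g\mapsto\g$. With this in hand, $G$ is a loop in which each element $g$ admits a two-sided inverse $\gg$ satisfying $\gg(gh)=h=(hg)\gg$, which is precisely the definition of an IP-loop.
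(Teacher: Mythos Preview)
Your proposal is correct. The paper itself gives no proof for this corollary, treating it as immediate from the preceding results; your argument simply makes explicit the observation that the identities $g(\gg h)=h$ and $(h\gg)g=h$ (from Lemma~\ref{gx=h} and its right-sided analogue) become the IP identities $\gg(gh)=h$ and $(hg)\gg=h$ upon the substitution $g\mapsto\g$, using $\overline{\g}=g$.
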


\begin{thm}
We have
\begin{equation*}
(gh)^{-1}=\hh\gg,\quad \forall g,h\in G
\end{equation*}
\end{thm}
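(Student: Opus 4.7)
The plan is to reduce the claim $(gh)^{-1}=\hh\gg$ to an equality in the group $\T$ via hypothesis 4) of Theorem \ref{rec-thm}. Since $(gh)^{-1}=\overline{gh}$ by the definition of the inverse and $\hh\gg=\h\g$ by the same definition applied to $g$ and $h$ separately, it suffices to verify that
\begin{equation*}
S_{\overline{gh}}=S_{\h\g},\qquad T_{\overline{gh}}=T_{\h\g}
\end{equation*}
in $\T$, and then to invoke assumption 4) of Theorem \ref{rec-thm}.

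First I would evaluate the left-hand sides. Writing $gh=\overline{\g}\,h$ (using $\overline{\g}=g$) and applying (\ref{rec-thm-3}a,b) with $g$ replaced by $\g$, together with (\ref{rec-thm-2}) and (\ref{P-inverse1}), yields
\begin{equation*}
S_{gh}=P^{-1}_gS_hT^{-1}_g,\qquad T_{gh}=S^{-1}_gT_hP^{-1}_g.
\end{equation*}
A further use of (\ref{rec-thm-2}) then gives $S_{\overline{gh}}=T_gS^{-1}_hP_g$ and $T_{\overline{gh}}=P_gT^{-1}_hS_g$.

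Next I would evaluate the right-hand sides directly by applying (\ref{rec-thm-4}a,b) with $h$ replaced by $\h$ and using (\ref{rec-thm-2}):
\begin{equation*}
S_{\h\g}=T_gS_{\h}P_g=T_gS^{-1}_hP_g,\qquad T_{\h\g}=P_gT_{\h}S_g=P_gT^{-1}_hS_g.
\end{equation*}
These coincide with the expressions just obtained for $S_{\overline{gh}}$ and $T_{\overline{gh}}$, and assumption 4) of Theorem \ref{rec-thm} then forces $\overline{gh}=\h\g$, i.e.\ $(gh)^{-1}=\hh\gg$.

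I do not anticipate any real obstacle: the argument is pure bookkeeping with (\ref{rec-thm-2})--(\ref{rec-thm-4}), (\ref{P-inverse1}) and $\overline{\g}=g$. The only point worth noting is that both (\ref{rec-thm-3}) and (\ref{rec-thm-4}) have to be used -- the first to re-express $S_{gh}$ after writing $gh=\overline{\g}\,h$, the second to read off $S_{\h\g}$ directly -- and that the common conjugation-like expression $T_gS^{-1}_hP_g$ (respectively $P_gT^{-1}_hS_g$) produced on both sides is exactly what makes the injectivity assumption 4) applicable.
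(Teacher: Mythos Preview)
Your proposal is correct and follows essentially the same route as the paper: both compute $S_{gh}$ via (\ref{rec-thm-3}a) after writing $gh=\overline{\g}\,h$, invert using (\ref{rec-thm-2}) and (\ref{P-inverse1}) to obtain $S_{(gh)^{-1}}=T_gS^{-1}_hP_g$, and then recognise this as $S_{\hh\gg}$ via (\ref{rec-thm-4}a), finishing with assumption~4). The paper only writes out the $S$-computation and defers the $T$-part to ``analogously''; you spell both out, but the argument is the same.
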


\begin{proof}
It is sufficient to check that
\begin{equation*}
S_{(gh)^{-1}}=S_{\hh\gg},\quad
T_{(gh)^{-1}}=T_{\hh\gg}
\end{equation*}
Calculate
\begin{equation*}
S_{(gh)^{-1}}
=S^{-1}_{gh}
=\left(P_{\gg}S_hT_{\gg}\right)^{-1}
=T_gS^{-1}_hP_g
=T_gS_{\hh}P_g
=S_{\hh\gg}
\end{equation*}
The second relation can be checked analogously.
\end{proof}

\section{Flexibility and triple closure}

\begin{thm}[flexibility]
We have
\begin{equation*}
gh\cdot k=g\cdot hk,\quad \forall g,h\in G
\end{equation*}
\end{thm}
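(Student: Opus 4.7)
Following the template used throughout the paper, my plan is to verify $S_{(gh)g}=S_{g(hg)}$ and $T_{(gh)g}=T_{g(hg)}$ and then invoke assumption 4) of Theorem \ref{rec-thm}. I read the statement as flexibility $(gh)g=g(hg)$, since only $g$ and $h$ are quantified and the theorem is so labelled; the written $k$ appears to be a typo for $g$.

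As a preparatory step, I would produce the ``unbarred'' versions of (\ref{rec-thm-3})--(\ref{rec-thm-4}). Substituting $g\to\g$ into, for instance, $S_{\g h}=P_gS_hT_g$ and using $\overline{\g}=g$ together with (\ref{rec-thm-2}) and (\ref{P-inverse1}), one obtains
\begin{equation*}
S_{gh}=P^{-1}_gS_hT^{-1}_g,\qquad S_{hg}=T^{-1}_gS_hP^{-1}_g,
\end{equation*}
and the analogous six identities for $T_{gh}$, $P_{gh}$, $T_{hg}$, $P_{hg}$.

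The main computation is then a single substitution in each direction:
\begin{equation*}
S_{(gh)g}=T^{-1}_gS_{gh}P^{-1}_g=T^{-1}_gP^{-1}_gS_hT^{-1}_gP^{-1}_g,\quad
S_{g(hg)}=P^{-1}_gS_{hg}T^{-1}_g=P^{-1}_gT^{-1}_gS_hP^{-1}_gT^{-1}_g.
\end{equation*}
These agree because $T_gP_g=P_gT_g$ by (\ref{TP-PS-comm}a), so $T^{-1}_gP^{-1}_g=P^{-1}_gT^{-1}_g$ as well. The parallel manipulation for $T$ yields
\begin{equation*}
T_{(gh)g}=P^{-1}_gS^{-1}_gT_hP^{-1}_gS^{-1}_g,\quad
T_{g(hg)}=S^{-1}_gP^{-1}_gT_hS^{-1}_gP^{-1}_g,
\end{equation*}
which coincide by the companion relation (\ref{TP-PS-comm}b).

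There is no real obstacle: once the unbarred forms of the product rules are written down, the entire argument reduces to the commutation relations already established in Section 4. Morally, flexibility of the operation on $G$ is just the algebraic shadow of the commutativity of $T_g$ and $P_g$ (respectively $S_g$ and $P_g$) inside $\T$.
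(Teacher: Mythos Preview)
Your proof is correct and matches the paper's approach: both unbar the product rules (\ref{rec-thm-3})--(\ref{rec-thm-4}) via $g\mapsto\g$ and then compare $S$ and $T$ on the two sides. The only cosmetic difference is that the paper pushes one step further, using $S_gT_gP_g=E$ to collapse $T_g^{-1}P_g^{-1}$ to $S_g$ (and $P_g^{-1}S_g^{-1}$ to $T_g$), obtaining the closed forms $S_{(gh)g}=S_gS_hS_g$ and $T_{(gh)g}=T_gT_hT_g$, which then feed directly into the subsequent triple-closure theorem.
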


\begin{proof}
It is sufficient to check that
\begin{equation*}
S_{gh\cdot k}=S_{g\cdot hk},\quad
T_{gh\cdot k}=T_{g\cdot hk}
\end{equation*}
Calculate
\begin{align*}
S_{gh\cdot k}
&=T_{\gg}S_{gh}P_{\hh}
=T_{\gg}P_{\gg}S_{h}T_{\gg}P_{\hh}
=S_gS_hS_g\\
T_{gh\cdot k}
&=P_{\gg}T_{gh}S_{\hh}
=P_{\gg}S_{\gg}T_{h}P_{\gg}S_{\hh}
=T_gT_hT_g\\
S_{g\cdot hk}
&=P_{\gg}S_{hg}T_{\hh}
=P_{\gg}T_{\gg}S_{h}P_{\gg}T_{\hh}
=S_gS_hS_g\\
T_{g\cdot hk}
&=S_{\gg}T_{hg}P_{\hh}
=S_{\gg}P_{\gg}T_{h}S_{\gg}P_{\hh}
=T_gT_hT_g
\tag*{\qed}
\end{align*}
\renewcommand{\qed}{}
\end{proof}

From proof of this Theorem follows
\begin{thm}[triple closure]
We have the triple closure relations:
\begin{equation}
S_gS_hS_g=S_{ghk},\quad
T_gT_hT_g=T_{ghk},\quad
P_gP_hP_g=P_{ghk},\quad \forall g,h\in G
\end{equation}
\end{thm}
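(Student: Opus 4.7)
The plan is to observe that the computations already carried out in the proof of the flexibility theorem directly yield the $S$ and $T$ identities, and then to perform an analogous computation for $P$. Since flexibility ensures $(gh)g=g(hg)$, we may write this common element unambiguously as $ghg$; the four displayed lines of the preceding proof then give $S_{ghg}=S_g S_h S_g$ and $T_{ghg}=T_g T_h T_g$ at once.

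For the third identity, I would expand $P_{(gh)g}$ in two stages, exactly paralleling the $S$ and $T$ computations but using (\ref{rec-thm-3}c) and (\ref{rec-thm-4}c). Substituting $g\mapsto\g$ in (\ref{rec-thm-4}c) yields $P_{hg}=S_{\gg}P_h T_{\gg}$, and the analogous substitution in (\ref{rec-thm-3}c) yields $P_{gh}=T_{\gg}P_h S_{\gg}$. Applying the first of these with $h$ replaced by $gh$ and then expanding the resulting $P_{gh}$ via the second expresses $P_{(gh)g}$ as the word $S_{\gg}T_{\gg}P_h S_{\gg}T_{\gg}$.

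The remaining step is to recognize that $S_{\gg}T_{\gg}=P_g$. This follows from (\ref{rec-thm-1}), which gives $P_g=T^{-1}_g S^{-1}_g$, combined with the commutativity (\ref{ST-comm}), which lets us also write $P_g=S^{-1}_g T^{-1}_g=S_{\gg}T_{\gg}$. Substituting back gives $P_{ghg}=P_g P_h P_g$, completing the proof.

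I expect no genuine obstacle: every ingredient (flexibility, the commutation relations of Section~4, and the reconstruction axioms of Theorem~\ref{rec-thm}) is already in hand, and the $P$-computation is strictly formally parallel to the $S$- and $T$-computations carried out in the preceding theorem. The only care required is in keeping track of the substitutions $g\mapsto\g$ in the axioms (\ref{rec-thm-3}) and (\ref{rec-thm-4}), and in invoking the commutativity of $S_g$ with $T_g$ at the right moment.
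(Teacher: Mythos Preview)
Your proposal is correct and follows the paper's own approach: the paper simply records that the triple closure relations follow ``from proof of this Theorem'' (i.e., from the flexibility computation), which indeed yields the $S$- and $T$-identities verbatim. The paper, however, gives no separate argument for the $P$-identity, whereas you supply one explicitly; your computation of $P_{(gh)g}=S_{\gg}T_{\gg}P_hS_{\gg}T_{\gg}=P_gP_hP_g$ via the substitution $g\mapsto\g$ in (\ref{rec-thm-3}c), (\ref{rec-thm-4}c) together with $S_{\gg}T_{\gg}=P_g$ is valid and fills a gap the paper leaves tacit.
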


\section{Moufang identity}

\begin{thm}[Moufang identity]
In $G$ the Moufang identity is satisfied:
\begin{equation*}
(gh)(kg)=g(hk)g,\quad\forall g,h,k\in G
\end{equation*}
\end{thm}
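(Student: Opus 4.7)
The plan is to follow the paper's standing strategy: by assumption~4) of Theorem~\ref{rec-thm}, it is enough to verify
\[
S_{(gh)(kg)}=S_{g(hk)g},\qquad T_{(gh)(kg)}=T_{g(hk)g}.
\]
The right-hand sides are handled at once by the triple closure theorem, which gives $S_{g(hk)g}=S_gS_{hk}S_g$ and $T_{g(hk)g}=T_gT_{hk}T_g$.

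For the left-hand sides my first step is to recast (\ref{rec-thm-3}) and (\ref{rec-thm-4}) in the bar-free forms
\[
S_{ab}=P_a^{-1}S_bT_a^{-1}=T_b^{-1}S_aP_b^{-1},\qquad T_{ab}=S_a^{-1}T_bP_a^{-1}=P_b^{-1}T_aS_b^{-1},
\]
together with the analogous pair for $P_{ab}$, valid for all $a,b\in G$. These arise by substituting $g\mapsto\bar{g}$ into (\ref{rec-thm-3}) and (\ref{rec-thm-4}) and invoking $\bar{\bar{g}}=g$ together with (\ref{rec-thm-2}) and (\ref{P-inverse1}). Taking $a=gh$ and $b=kg$, and using the first form to peel off the outer product, I then expand $S_{gh},T_{gh},P_{gh}$ again by the first form, while expanding $S_{kg},T_{kg},P_{kg}$ by the second form. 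With this coordinated choice the intermediate factors $T_g^{\pm 1}$ and $P_g^{\pm 1}$ meet their inverses and cancel in pairs, and I expect to reach
\[
S_{(gh)(kg)}=S_g\bigl(P_h^{-1}S_kT_h^{-1}\bigr)S_g,\qquad T_{(gh)(kg)}=T_g\bigl(S_h^{-1}T_kP_h^{-1}\bigr)T_g.
\]
The bracketed middle factors are, by the very same rewritten formulas now taken with $a=h$, $b=k$, exactly $S_{hk}$ and $T_{hk}$, so the left-hand sides match $S_gS_{hk}S_g$ and $T_gT_{hk}T_g$ respectively, and assumption~4) of Theorem~\ref{rec-thm} yields the Moufang identity.

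The main obstacle is purely bookkeeping: one has to pair the two equivalent forms of $S_{ab}$, $T_{ab}$, $P_{ab}$ with the outer factors $gh$ and $kg$ in a coordinated way, so that every ``bridge'' factor $T_g^{\pm1}$ or $P_g^{\pm1}$ appearing between the two expansions meets its inverse and cancels. No structural input beyond the triple closure theorem and the inversion relations already collected in the previous sections is required.
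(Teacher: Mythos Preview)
Your proposal is correct and follows essentially the same route as the paper: apply the bar-free form of (\ref{rec-thm-3}a) to $S_{(gh)(kg)}$, expand the outer factors $P_{gh}^{-1}$, $T_{gh}^{-1}$ via (\ref{rec-thm-3}), reduce the middle, and then invoke the triple closure relations. The only cosmetic difference is that where you expand $S_{kg}$ directly by the second bar-free form $S_{kg}=T_g^{-1}S_kP_g^{-1}$ so that the bridge factors cancel immediately, the paper instead groups $T_gS_{kg}P_g$ and recognizes it as $S_{kg\cdot g^{-1}}=S_k$ via (\ref{rec-thm-4}a) and the IP property; the two moves are algebraically identical.
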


\begin{proof}
It is sufficient to check that
\begin{equation*}
S_{(gh)(kg)}=S_{g(hk)g},\quad 
T_{(gh)(kg)}=T_{g(hk)g}
\end{equation*}
Calculate
\begin{align*}
S_{(gh)(kg)}
&=P_{(gh)^{-1}}S_{kg}T_{(gh)^{-1}}\\
&=P^{-1}_{gh}S_{kg}T^{-1}_{gh}\\
&=\left(T_{\gg}P_yS_{\gg}\right)^{-1}S_{kg}\left(S_{\gg}T_yP_{\gg}\right)^{-1}\\
&=S_gP^{-1}_hT_gS_{kg}P_gT^{-1}_{h}S_g\\
&=S_gP^{-1}_{h}S_{kg\cdot \gg}T^{-1}_{h}S_g\\
&=S_gP^{-1}_{h}S_{k}T^{-1}_{h}S_g\\
&=S_gS_{hk}S_g\\
&=S_{g(hk)g}
\end{align*}
Analogously,
\begin{align*}
T_{(gh)(kg)}
&=S_{(gh)^{-1}}T_{kg}P_{(gh)^{-1}}\\
&=S^{-1}_{gh}T_{kg}P^{-1}_{gh}\\
&=\left(P_{\gg}S_yT_{\gg}\right)^{-1}T_{kg}\left(T_{\gg}P_yS_{\gg}\right)^{-1}\\
&=T_gS^{-1}_hP_gS_{kg}S_gP^{-1}_{h}T_g\\
&=T_gS^{-1}_{h}T_{kg\cdot \gg}P^{-1}_{h}T_g\\
&=T_gS^{-1}_{h}T_{k}P^{-1}_{h}T_g\\
&=T_gS_{hk}T_g\\
&=T_{g(hk)g}
\tag*{\qed}
\end{align*}
\renewcommand{\qed}{}
\end{proof}

Theorem \ref{rec-thm} has been proved.

\section*{Acknowledgement}

Research was in part supported by the Estonian Science Foundation, Grant 6912.

\bigskip\noindent
Department of Mathematics\\
Tallinn University of Technology\\
Ehitajate tee 5, 19086 Tallinn, Estonia\\ 
E-mail: eugen.paal@ttu.ee

\end{document}